\tikzstyle{none}=[draw=none]   
\tikzstyle{bigtiparrow}=[->,thick, >=angle 90]
\tikzstyle{bigtiparrow2}=[->,thick, >=angle 90,preaction={draw=white, -,line width=6pt}]
\tikzstyle{lrarrow}=[<->,thick, >=angle 90,preaction={draw=white, -,line width=6pt}]
\tikzstyle{new}=[rectangle,fill=white,draw=white, inner sep=2pt]
\tikzstyle{new2}=[rectangle,fill=white,draw=white, inner sep=6pt]
\DeclareMathOperator{\rng}{\mathrm{rng}}
\DeclareMathOperator{\supp}{\mathrm{supp}}
\DeclareMathOperator{\Cost}{\mathrm{Cost}}
\newcommand{\Aut}{\mathrm{Aut}}
  \newcommand{\N}{\mathbb N}
  \newcommand{\Z}{\mathbb Z}
 \newcommand{\dom}{\mathrm{dom}\;}
  \newcommand{\inv}{^{-1}}
  \renewcommand{\leq}{\leqslant}
  \renewcommand{\geq}{\geqslant}
  \newcommand{\act}{\curvearrowright}
  \newcommand{\la}{\left\langle}
  \newcommand{\ra}{\right\rangle}
\newtheorem{thm}{Theorem}
\newtheorem{cor}[thm]{Corollary}
\newtheorem{prop}[thm]{Proposition}
\theoremstyle{definition}
\newtheorem*{claim}{Claim}
\newtheorem{df}[thm]{Definition}
\newtheorem*{rmq}{Remark}
\title{The number of topological generators for full groups of ergodic equivalence relations}
\author{François Le Maître\footnote{Research partially supported by ANR grant AGORA (ANR-09-BLAN-0059)}}
\date{}
\begin{document}

\maketitle

\begin{abstract}
We completely elucidate the relationship between two invariants associated with an ergodic probability measure-preserving (pmp) equivalence relation, namely its cost and the minimal number of topological generators of its full group. It follows that for any free pmp ergodic action of the free group on $n$ generators, the minimal number of topological generators for the full group of the action is $n+1$, answering a question of Kechris.
\end{abstract}

Let $\Gamma$ be a countable group of measure-preserving automorphisms on a standard probability space $(X,\mu)$, in other words a countable subgroup of $\Aut(X,\mu)$. Classical ergodic theory is concerned with the conjugacy class of $\Gamma$, especially when $\Gamma=\Z$. A natural invariant is the partition of $X$ into orbits induced by $\Gamma$, that is,  the orbit equivalence relation $x \,\mathcal R_\Gamma\,y$ iff there exists an element $\gamma\in\Gamma$ such that $x=\gamma y$. This probability measure-preserving (pmp) equivalence relation is entirely captured by its full group $[\mathcal R_\Gamma]$, defined to be the group of measure-preserving automorphisms $\varphi$ such that $\varphi(x) \,\mathcal R_\Gamma\, x$ for all $x\in X$. So while classical ergodic theory studies the conjugacy class of $\Gamma$ in $\Aut (X,\mu)$, orbit equivalence theory considers the conjugacy class  of the much bigger group $[\mathcal R_\Gamma]$ in $\Aut(X,\mu)$.

This motivates the study of $[\mathcal R_\Gamma]$ as a topological group in its own right. Let $d_u$ be the bi-invariant complete metric on $\Aut(X,\mu)$ defined by $$d_u(T,U)=\mu(\{x\in X: T(x)\neq U(x)\}).$$
Then $[\mathcal R_\Gamma]$ is closed and separable for this metric, in other words it is a Polish group. Some of its topological properties are relevant to the study of $\mathcal R_\Gamma$. For instance, a theorem of Dye \cite[prop. \!5.1]{MR0158048} asserts that closed normal subgroups of $[\mathcal R_\Gamma]$ are in a natural bijection with $\Gamma$-invariant subsets of $X$. In particular, $[\mathcal R_\Gamma]$ is topologically simple iff $\mathcal R_\Gamma$ is ergodic. Another example is given by amenability:  Giordano and Pestov showed that $[\mathcal R_\Gamma]$  is  extremely amenable iff $\mathcal R_\Gamma$ is amenable \cite[thm. \!5.7]{MR2311665}.

If $G$ is a separable topological group, a natural invariant of $G$ is its minimal number of topological generators $t(G)$, that is, the least $n\in\N\cup\{\infty\}$ such that there are $g_1,...,g_n\in G$ which generate a dense subgroup of $G$. For full groups of pmp equivalence relations, the investigations on this number were started by a question of Kechris  \cite[4.(D)]{MR2583950}. Kittrell and Tsankov provided a partial answer to this question, using the theory of cost initiated by Levitt \cite{MR1366313} and developped by Gaboriau in the seminal article \cite{MR1728876}. They showed that if $\mathcal R$ is a pmp ergodic equivalence relation, then the cost of $\mathcal R$ is finite iff $[\mathcal R]$ is topologically finitely generated \cite[thm. \!1.7]{MR2599891}. Using a theorem of Matui \cite[ex. \!6.2]{MR2205435}, they also showed that for an ergodic $\Z$-action, the associated orbit equivalence relation $\mathcal R_\Z$ satisfies $2\leq t([\mathcal R_\Z])\leq 3$,
and deduced the explicit bounds $$\lfloor\Cost (\mathcal R)\rfloor+1\leq t([\mathcal R])\leq 3(\lfloor\Cost(\mathcal R)\rfloor+1)$$
for an arbitrary pmp ergodic equivalence relation $\mathcal R$. These bounds were later refined by Matui \cite[thm. \!3.2]{matuifullgp2} who showed that the number of topological generators for the full group of an ergodic equivalence relation generated by a $\Z$-action is in fact equal to two, which implies that
$$\lfloor\Cost (\mathcal R)\rfloor+1\leq t([\mathcal R])\leq 2(\lfloor\Cost(\mathcal R)\rfloor+1).$$

In this paper, we show that the minimal number of topological generators for the full group of $\mathcal R$ is actually completely determined by the cost of $\mathcal R$.

\begin{thm}\label{thethm}
Let $\mathcal R$ be a pmp ergodic equivalence relation. Then the minimal number $t([\mathcal R])$ of topological generators for the full group of $\mathcal R$ is related to the integer part of the cost of $\mathcal R$ by the formula
$$t([\mathcal R])=\lfloor\Cost(\mathcal R)\rfloor+1.$$
\end{thm}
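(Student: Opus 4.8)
Since the lower bound $t([\mathcal R])\geq\lfloor\Cost(\mathcal R)\rfloor+1$ is already due to Kittrell and Tsankov, the content of the theorem is the reverse inequality; and since $\Cost(\mathcal R)=\infty$ forces $[\mathcal R]$ not to be topologically finitely generated (again Kittrell--Tsankov), I may assume $\Cost(\mathcal R)<\infty$ and set $n=\lfloor\Cost(\mathcal R)\rfloor\geq 1$. The plan is to build $[\mathcal R]$ on top of an ergodic hyperfinite core, on which Matui's theorem furnishes two topological generators, and then to absorb the remaining fractional unit of cost --- which is exactly what the strict inequality $\Cost(\mathcal R)<n+1$ leaves over $n$ --- into the generators already at hand, so that the total stays $n+1$.

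First I would fix an ergodic hyperfinite subrelation $\mathcal R_0\subseteq\mathcal R$ (it is classical that $[\mathcal R]$ contains an ergodic element, hence such an $\mathcal R_0$) and use Gaboriau's theory of cost to put $\mathcal R$ into a normal form $\mathcal R=\mathcal R_0\vee\mathcal R_{U_1}\vee\cdots\vee\mathcal R_{U_{n-1}}\vee\mathcal R_{\psi_1}\vee\cdots\vee\mathcal R_{\psi_m}$, where $U_1,\dots,U_{n-1}\in[\mathcal R]$ and the $\psi_j$ are partial isomorphisms of $\mathcal R$ that permute sets $D_j$ with $\mu(\bigcup_j D_j)<1$; the last inequality is available precisely because the cost carried by the $\psi_j$, namely $\Cost(\mathcal R)-n$, is strictly less than $1$. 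Beyond merely generating $\mathcal R$, the point of this step is to arrange the $U_i$ so that there is enough room on $D:=\bigcup_j D_j$ that the $\psi_j$ can be recovered from $\mathcal R_0$ together with perturbations of the $U_i$ supported inside $D$ --- this foresight is what makes the final step possible.

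Two ingredients then yield $n+1$ generators for the full group of $\mathcal R':=\mathcal R_0\vee\mathcal R_{U_1}\vee\cdots\vee\mathcal R_{U_{n-1}}$. By Matui's theorem, $[\mathcal R_0]$ is topologically generated by two elements $a,b$. By a Dye-type gluing lemma --- a closed subgroup of $[\mathcal R]$ containing the full group of an ergodic subrelation is itself a full group, by a piecing-together argument using the Rokhlin lemma and the transitivity of ergodic full groups on sets of equal measure --- one gets: whenever $\mathcal S\subseteq\mathcal R$ is ergodic and $\mathcal R=\mathcal S\vee\mathcal R_V$ for some $V\in[\mathcal R]$, then $[\mathcal R]=\overline{\langle[\mathcal S]\cup\{V\}\rangle}$. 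Applying this lemma $n-1$ times, adjoining $U_1,\dots,U_{n-1}$ in turn to $[\mathcal R_0]$, shows $[\mathcal R']=\overline{\langle a,b,U_1,\dots,U_{n-1}\rangle}$.

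The crux --- and the step I expect to be the main obstacle --- is to pass from $\mathcal R'$ to $\mathcal R=\mathcal R'\vee\bigvee_j\mathcal R_{\psi_j}$ without spending a new generator. Because every $\psi_j$ is supported inside $D$ and $\mu(D)<1$, I would perturb one generator already present, say replace $U_1$ by a $U_1'$ agreeing with $U_1$ off $D$, engineered so that (i) $\langle a,b,U_1',U_2,\dots,U_{n-1}\rangle$ still has dense closure in $[\mathcal R']$ --- the perturbation lives on the small set $D$, and whatever part of $\mathcal R'$ it destroys there remains recoverable from $\mathcal R_0$ and the untouched generators --- while (ii) $\mathcal R_0\vee\mathcal R_{U_1'}\vee\mathcal R_{U_2}\vee\cdots\vee\mathcal R_{U_{n-1}}$ now also contains $\bigvee_j\mathcal R_{\psi_j}$, and so equals $\mathcal R$. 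Granting (i) and (ii), the gluing lemma promotes ``dense in $[\mathcal R']$'' to ``dense in $[\mathcal R]$'', whence $[\mathcal R]=\overline{\langle a,b,U_1',U_2,\dots,U_{n-1}\rangle}$ and $t([\mathcal R])\leq n+1$. The difficulty is to realize this simultaneous perturbation --- rerouting the fractional unit of cost through the $U_i$ over the set $D$ while keeping topological generation intact --- and it is exactly here that the strict inequality $\Cost(\mathcal R)<\lfloor\Cost(\mathcal R)\rfloor+1$ cannot be dispensed with. Together with the Kittrell--Tsankov lower bound this gives $t([\mathcal R])=\lfloor\Cost(\mathcal R)\rfloor+1$.
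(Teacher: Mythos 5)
Your overall scaffolding (lower bound via Kittrell--Tsankov/Miller, an ergodic hyperfinite core $\mathcal R_0$ handled by Matui's theorem, Gaboriau's induction lemma to bound the extra cost by $n$, and the join-density theorem of Kittrell--Tsankov to pass from subrelations to $\mathcal R$) matches the paper's setup. But the proposal has a genuine gap exactly at the point you yourself flag as ``the crux'': the passage from $n+2$ potential generators to $n+1$, i.e.\ absorbing the leftover fractional cost without spending a new generator. You only say you would ``perturb $U_1$ on $D$, engineered so that (i) and (ii) hold'' and then proceed ``granting (i) and (ii)''. No construction of $U_1'$ is given, and no argument is offered for why a perturbation on $D$ can simultaneously generate the $\psi_j$'s and keep the closure of $\langle a,b,U_1',U_2,\dots\rangle$ large; this is precisely the new content of the theorem (without it one only recovers a bound of the form $t([\mathcal R])\leq\lfloor\Cost(\mathcal R)\rfloor+2$), so it cannot be granted. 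A secondary issue is that your ``Dye-type gluing lemma'' ($\mathcal S$ ergodic, $\mathcal R=\mathcal S\vee\mathcal R_V$ $\Rightarrow$ $[\mathcal R]=\overline{\langle[\mathcal S]\cup\{V\}\rangle}$, via ``a closed subgroup containing an ergodic full group is a full group'') is asserted with only a hand-waved justification; the paper deliberately does not rely on such a statement.

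For comparison, the paper's mechanism is different at both places. Instead of gluing the extra cost into $n-1$ full automorphisms plus a remainder, it splits the extra graphing into $n$ pieces of equal cost $c<1$ and shapes each into a pre-$(p+2)$-cycle whose last rung is a single partial isomorphism $\psi\in[[\mathcal R_0]]$ common to all pieces; Proposition \ref{isopgen} (conjugation by the cycle $C_{\tilde\Phi_i}$ moves $\psi$ around the cycle, then Theorem \ref{ktdense}) shows each $[\mathcal R_{\tilde\Phi_i}]$ lies in the closed group generated by $[\mathcal R_0]\cup\{C_{\tilde\Phi_i}\}$, so each unit of cost costs exactly one generator with no gluing lemma needed. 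The final saved generator does not come from perturbing a cost-carrying generator: it uses the refined form of Matui's theorem (Theorem \ref{EO}), namely that the second generator $U_0$ of $[\mathcal R_0]$ can be taken to be an involution of arbitrarily small support. Since $\supp U_0$ is disjoint from the sets $A_1,\dots,A_{p+2}$ carrying the $(p+2)$-cycle $C_{\tilde\Phi_1}$ (this is where $\Cost(\mathcal R)<n+1$ and the choice of $p$ enter), the product $U_1=U_0C_{\tilde\Phi_1}$ has both factors as powers ($p+2$ odd, order $2$), so $T_0,U_1,C_{\tilde\Phi_2},\dots,C_{\tilde\Phi_n}$ suffice. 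Your proposal ignores the involution/small-support refinement entirely (you use only ``two topological generators $a,b$''), yet some structural property of this kind is what makes the merging step work; as written, the decisive step of your argument is missing.
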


Using a theorem of Gaboriau (cf. theorem \ref{oefn}), we get the following corollary:

\begin{cor}Let $\mathbb F_n\act(X,\mu)$ be a free pmp ergodic action. Then the minimal number of topological generators for the full group of this action is $n+1$.
\end{cor}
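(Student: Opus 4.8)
The plan is to simply combine Theorem~\ref{thethm} with Gaboriau's computation of the cost of free actions of free groups. First I would let $\mathcal R$ denote the orbit equivalence relation of the given action $\mathbb F_n\act(X,\mu)$. Since the action is free, pmp and ergodic, $\mathcal R$ is a pmp ergodic equivalence relation, and by definition its full group $[\mathcal R]$ is precisely the full group of the action; in particular the quantity $t$ of the full group of the action is literally $t([\mathcal R])$, so Theorem~\ref{thethm} applies verbatim.

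Next I would invoke the theorem of Gaboriau recalled as Theorem~\ref{oefn}, which asserts that a free pmp action of $\mathbb F_n$ has cost exactly $n$. Concretely, the $n$ standard generators of $\mathbb F_n$ furnish a graphing of $\mathcal R$ of cost $n$, whence $\Cost(\mathcal R)\leq n$; Gaboriau's result provides the matching lower bound $\Cost(\mathcal R)\geq n$, so $\Cost(\mathcal R)=n$. Since $n$ is already an integer, $\lfloor\Cost(\mathcal R)\rfloor=n$, and Theorem~\ref{thethm} then gives
$$t([\mathcal R])=\lfloor\Cost(\mathcal R)\rfloor+1=n+1,$$
which is the claim.

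There is essentially no obstacle beyond bookkeeping: one only has to check that the hypotheses of Theorem~\ref{thethm} are satisfied (ergodicity is assumed, and pmp is part of the data) and to cite the precise form of Gaboriau's theorem. The entire mathematical content of the corollary is carried by the main theorem; the freeness assumption enters only through Gaboriau's rigidity result, which pins the cost down to the integer $n$ rather than merely bounding it above by $n$.
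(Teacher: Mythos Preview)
Your proposal is correct and matches the paper's approach exactly: the corollary is stated immediately after Theorem~\ref{thethm} with the remark that it follows from Gaboriau's cost computation (Theorem~\ref{oefn}), and no further argument is given. Your write-up simply spells out this two-line deduction.
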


Theorem \ref{thethm} also yields the first examples of nonamenable equivalence relations whose full group is topologically 2-generated. For instance, one can use theorem VI.26.(b) of \cite{MR1728876} and see that:

\begin{cor}If $n\geq 3$, then every free pmp ergodic action of $Sl_n(\Z)$ yields a full group whose minimal number of topological generators is $2$.
\end{cor}

Moreover,  if $\mathcal R$ is any ergodic pmp equivalence relation, one can fully recover the cost of $\mathcal R$ as the infimum over topological generators of $[\mathcal R]$ of the sums of the measures of their supports (cf. theorem \ref{coutgentop}). \\

Note that in 2011 Marks showed that pmp modular actions of $\mathbb F_n$ have a full group with $n+1$ topological generators (cf. corrections and updates of \cite{MR2583950}). The question of the number of topological generators for non ergodic aperiodic equivalence relations will be treated in a forthcoming paper, where it will be proved for instance that for free pmp actions of $\mathbb F_n$, one still has $t([\mathcal R_{\mathbb F_n}])=n+1$. 

The paper is organised as follows: section \ref{defnot} contains standard definitions and notations used throughout the paper as well as a rough review of the orbit equivalence theory we need, section \ref{rphi} introduces the notion of pre-$p$-cycle, which is then used to prove theorem \ref{thethm} in section \ref{theproof}. 

\section{Definitions and notations}\label{defnot}

Everything will be understood ‘‘modulo sets of measure zero''. We now review some standard notation and definitions.

If $(X,\mu)$ is a standard probability space, and $A,B$ are Borel subsets of $X$, a \textbf{partial isomorphism} of $(X,\mu)$ of \textbf{domain} $A$ and \textbf{range} $B$ is a Borel bijection $f: A\to B$ which is measure-preserving for the measures induced by $\mu$ on $A$ and $B$ respectively. We denote by $\dom f=A$ its domain, and by $\rng f=B$ its range. Note that in particular, $\mu(\dom f)=\mu(\rng f)$. A \textbf{graphing} is a countable set of partial isomorphisms of $(X,\mu)$, denoted by $\Phi=\{\varphi_1,...,\varphi_k,...\}$ where the $\varphi_k$'s are partial isomorphisms. It \textbf{generates} a \textbf{measure-preserving equivalence relation} $\mathcal R_\Phi$, defined to be the smallest equivalence relation containing $(x,\varphi(x))$ for every $\varphi\in \Phi$ and $x\in\dom\varphi$.  The \textbf{cost} of a graphing $\Phi$ is the sum of the measures of the domains of the partial isomorphisms it contains. The \textbf{cost} of a measure-preserving equivalence relation $\mathcal R$ is the infimum of the costs of the graphings that generate it, we denote it by $\Cost(\mathcal R)$. 
The \textbf{full group} of $\mathcal R$ is the group $[\mathcal R]$ of automorphisms of $(X,\mu)$ which induce permutations in the $\mathcal R$-classes, that is
$$[\mathcal R]=\{\varphi\in\Aut(X,\mu): \forall x\in X, \varphi(x)\,\mathcal R\, x\}.$$
It is a separable group when equipped with the complete metric $d_u$ defined by $$d_u(T,U)=\mu(\{x\in X: T(x)\neq U(x)\}.$$ One also defines the \textbf{pseudo full group} of $\mathcal R$, denoted by $[[\mathcal R]]$, which consists of all partial isomorphisms $\varphi$ such that $\varphi(x)\, \mathcal R \, x$ for all $x\in \dom\varphi$. 

Say that a measure-preserving equivalence relation $\mathcal R$ is \textbf{ergodic} when every Borel $\mathcal R$-saturated set has measure $0$ or $1$. We will use without mention the following standard fact about ergodic measure-preserving equivalence relations:

\begin{prop}[see e.g. \cite{MR2095154}, lemma 7.10.]\label{isopar}
Let $\mathcal R$ be an ergodic measure-preserving equivalence relation on $(X,\mu)$, let $A$ and $B$ be two Borel subsets of $X$ such that $\mu(A)=\mu(B)$. Then there exists $\varphi\in[[\mathcal R]]$ of domain $A$ and range $B$.
\end{prop}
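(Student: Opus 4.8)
The plan is to prove the proposition by a greedy exhaustion argument, with ergodicity providing the crucial extension step. First I would fix a countable graphing $\Phi=\{\varphi_1,\varphi_2,\dots\}$ generating $\mathcal R$ and enumerate as $(g_n)_{n\in\N}$ the countable collection of all partial isomorphisms obtained by composing finitely many of the $\varphi_i^{\pm 1}$; then $x\,\mathcal R\,y$ holds if and only if $y=g_n(x)$ for some $n$ with $x\in\dom g_n$, and each $g_n$ lies in $[[\mathcal R]]$.

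Next I would construct $\varphi$ as an increasing union of partial isomorphisms $\varphi_0\subseteq\varphi_1\subseteq\cdots$ in $[[\mathcal R]]$, all with $\dom\varphi_k\subseteq A$ and $\rng\varphi_k\subseteq B$. Start from $\varphi_0=\varnothing$. Given $\varphi_k$, put $A_k=A\setminus\dom\varphi_k$ and $B_k=B\setminus\rng\varphi_k$; since $\mu(A)=\mu(B)$ and the $\varphi_k$ are measure-preserving, $\mu(A_k)=\mu(B_k)$. For each $n$ set $C_k^n=\{x\in A_k:g_n(x)\in B_k\}$. Ergodicity makes the $\mathcal R$-saturation of $B_k$ conull, so up to a null set $\bigcup_n C_k^n=A_k$; in particular $\sup_n\mu(C_k^n)>0$ whenever $\mu(A_k)>0$. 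I would then let $n_k$ be the least $n$ with $\mu(C_k^n)\geq\tfrac12\sup_m\mu(C_k^m)$, set $C_k=C_k^{n_k}$, and define $\varphi_{k+1}=\varphi_k\cup g_{n_k}|_{C_k}$. Because $C_k\subseteq A_k$ and $g_{n_k}(C_k)\subseteq B_k$, the union is again a measure-preserving partial isomorphism, it lies in $[[\mathcal R]]$ since each new point $(x,g_{n_k}(x))$ belongs to $\mathcal R$, and its domain and range still sit inside $A$ and $B$ respectively.

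Finally I would set $\varphi=\bigcup_k\varphi_k$ and check that $\dom\varphi=A$ up to a null set. Write $A_\infty=A\setminus\dom\varphi=\bigcap_k A_k$ and $B_\infty=B\setminus\rng\varphi=\bigcap_k B_k$, so $\mu(A_\infty)=\mu(B_\infty)$. If this common value were positive, ergodicity would furnish some $m$ with $\mu(C_\infty^m)>0$, where $C_\infty^m=\{x\in A_\infty:g_m(x)\in B_\infty\}$; since $A_\infty\subseteq A_k$ and $B_\infty\subseteq B_k$, this gives $\mu(C_k^m)\geq\mu(C_\infty^m)>0$ and hence $\mu(C_k)\geq\tfrac12\mu(C_\infty^m)$ for every $k$. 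But the $C_k$ are pairwise disjoint subsets of $A$, so $\sum_k\mu(C_k)\leq\mu(A)<\infty$ and $\mu(C_k)\to0$, a contradiction. Therefore $\mu(\dom\varphi)=\mu(A)$, and since $\rng\varphi\subseteq B$ with $\mu(\rng\varphi)=\mu(\dom\varphi)=\mu(A)=\mu(B)$, also $\rng\varphi=B$ up to a null set; thus $\varphi\in[[\mathcal R]]$ has domain $A$ and range $B$. The point I expect to be the main obstacle is exactly this exhaustion step: a naive choice of $n_k$ (say the least $n$ with $\mu(C_k^n)>0$) could keep selecting negligible pieces forever, so at each stage one must grab a chunk comparable to the best one available — which is what the factor $\tfrac12$ ensures — so that a failure to exhaust $A$ produces infinitely many pairwise disjoint subsets of $A$ of measure bounded away from $0$.
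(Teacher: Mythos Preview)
The paper does not give its own proof of this proposition: it is stated as a standard fact with a pointer to \cite{MR2095154}, lemma 7.10, and is then used without further justification. So there is no argument in the paper to compare yours against.

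Your proof is correct. The greedy exhaustion you describe is exactly the classical route to this lemma: enumerate the partial isomorphisms coming from words in a graphing, at each stage grab a piece of $A_k$ that some $g_n$ carries into $B_k$ and whose measure is within a fixed factor of the best available, and use ergodicity to see that failure to exhaust would leave infinitely many pairwise disjoint chunks of $A$ with measure bounded away from zero. Two small points worth making explicit when you write it up: the conclusion that the $\mathcal R$-saturation of $B_k$ is conull uses $\mu(B_k)=\mu(A_k)>0$, and your enumeration $(g_n)$ should include the identity (the empty word) so that the saturation of $B_k$ really equals $\bigcup_n g_n^{-1}(B_k)$. With those in place the argument is complete.
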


We now give a very sparse outline of orbit equivalence theory (for a survey, see \cite{MR2827853}). For ergodic $\Z$-actions, it turns out that there is only one orbit equivalence relation: a theorem of Dye \cite[thm. \!5]{MR0131516} states that if $(X,\mu)$ is a standard probability space and $T,U$ are ergodic automorphisms of $(X,\mu)$ then their full groups are conjugated by an automorphism of $(X,\mu)$. This was later generalized by Ornstein and Weiss \cite{MR551753} who  proved that all amenable groups induce the same ergodic orbit equivalence relation.
 
Actually, inducing only one orbit equivalence relation characterizes amenable groups: if $\Gamma$ is a non amenable group, then it has continuum many non orbit equivalent actions, as was shown by Epstein \cite{MR2711968}, building on earlier work by Gaboriau, Lyons and Ioana (\cite{MR2534099} and  \cite{MR2810796}).

Last but not least, Gaboriau has computed the cost for free pmp actions of free groups, and deduced that two free pmp actions of $\mathbb F_n$ and $\mathbb F_m$ cannot be orbit equivalent if $n\neq m$.
\begin{thm}[\cite{MR1728876}, corollaire 1]\label{oefn} If $\mathbb F_n\act(X,\mu)$ is a free measure-preserving action, then the associated orbit equivalence relation has cost $n$.
\end{thm}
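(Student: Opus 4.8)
The plan is to establish the two inequalities $\Cost(\mathcal R_{\mathbb F_n})\le n$ and $\Cost(\mathcal R_{\mathbb F_n})\ge n$ separately. The upper bound is elementary: if $a_1,\dots,a_n$ are the free generators of $\mathbb F_n$ and $T_1,\dots,T_n\in\Aut(X,\mu)$ are the automorphisms by which they act, then $\Phi=\{T_1,\dots,T_n\}$ is a graphing generating $\mathcal R_{\mathbb F_n}$ of cost $\sum_{i=1}^n\mu(\dom T_i)=n$, so $\Cost(\mathcal R_{\mathbb F_n})\le n$. For the lower bound, the key structural observation is that, because the action is free, this same $\Phi$ is in fact a \emph{treeing} of $\mathcal R_{\mathbb F_n}$: for a.e.\ $x$ the map $\gamma\mapsto\gamma\cdot x$ identifies the $\mathcal R_{\mathbb F_n}$-class of $x$, equipped with the edges $\{y,T_iy\}$, with the Cayley graph of $\mathbb F_n$ for the generating set $a_1,\dots,a_n$, which is the $2n$-regular tree and hence has no cycles. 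Thus $\mathcal R_{\mathbb F_n}$ is treeable with a treeing of cost $n$, and $\Cost(\mathcal R_{\mathbb F_n})\ge n$ will follow from Gaboriau's general theorem that \emph{a treeing realizes the cost}: if $\Phi$ is a treeing of an equivalence relation $\mathcal R$, then $\Cost(\mathcal R)=\Cost(\Phi)$.

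It thus suffices to prove this last statement, whose nontrivial half is that every graphing $\Psi$ generating $\mathcal R$ satisfies $\Cost(\Psi)\ge\Cost(\Phi)$. The guiding intuition is the elementary fact that a connected graph on a vertex set $V$ has at least $|V|-1$ edges, with equality exactly for spanning trees. To implement it, I would pass to the combined graphing $\Phi\cup\Psi$ and argue that, among all subgraphings of $\Phi\cup\Psi$ that still generate $\mathcal R$, none has cost below $\Cost(\Phi)$ — in particular $\Psi$ does not. This rests on two complementary phenomena. Since $\Phi$ already generates $\mathcal R$, every edge of $\Psi$ is redundant in $\Phi\cup\Psi$ — it closes a $\Phi$-cycle — so pieces of $\Psi$-edges may be deleted one after another without destroying the property of generating $\mathcal R$, which lets one whittle $\Psi$ away down to $\Phi$. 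Conversely, each edge of $\Phi$ is a bridge of the treeing, so no piece of a $\Phi$-edge can be removed from a generating graphing without re-injecting at least as much cost elsewhere; quantifying this rigidity is what produces the bound $\Cost(\Phi)\le\Cost(\Psi)$.

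I expect the main obstacle to be carrying out the deletions in a \emph{measured} way. A redundant edge of $\Psi$ cannot in general be removed in one stroke: at each stage only the piece lying over a suitable Borel set is dispensable, so one must cut every partial isomorphism into countably many measurable pieces, reassign them along $\mathcal R$, and track a potential — the cost of the current graphing — through a process that is a priori transfinite, checking at the end that it has converged to $\Cost(\Phi)$. This measured-combinatorics bookkeeping, together with the accompanying induction formula for the behaviour of cost under restriction to a complete section, is the technical heart of the argument. An alternative route to the lower bound would invoke the inequality $\Cost(\mathcal R)-1\ge\beta_1^{(2)}(\mathcal R)$ together with $\beta_1^{(2)}(\mathcal R_{\mathbb F_n})=n-1$, but this merely relocates the difficulty into $L^2$-homology and is not obviously simpler.
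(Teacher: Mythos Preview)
The paper does not prove this theorem: it is quoted as a known result of Gaboriau \cite[corollaire~1]{MR1728876}, used only to derive the corollary about free $\mathbb F_n$-actions, and no argument whatsoever is supplied. There is thus no proof in the paper to compare your proposal against.

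That said, your outline is faithful to Gaboriau's own approach. The upper bound is immediate from the generating graphing $\{T_1,\dots,T_n\}$, and your identification of this graphing as a treeing --- via freeness of the action and the tree structure of the Cayley graph of $\mathbb F_n$ with respect to the free generators --- is exactly right. The reduction to the statement ``a treeing realizes the cost'' (Gaboriau's Th\'eor\`eme~1/IV.1) is the standard route, and your description of the obstacles --- cutting partial isomorphisms into countably many Borel pieces, rerouting them through the tree, and tracking the cost through an inductive process, together with the compression/induction formula for restrictions --- accurately reflects the technical content of Gaboriau's argument. The alternative via $\ell^2$-Betti numbers is also valid but, as you note, not more elementary. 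In short, nothing is wrong with your proposal; it simply goes far beyond what the present paper does, which is merely to cite the result.
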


The proof of our theorem will rely on a result of Matui which gives two topological generators for the full group of some ergodic $\Z$-action.  His construction actually yields the following stronger statement.

\begin{thm}[\cite{matuifullgp2}, theorem 3.2]\label{EO}
For every $\epsilon>0$,  there exists an ergodic automorphism $T$ of $(X,\mu)$ and an involution $U\in [ \mathcal R_T]$ with support of measure less than $\epsilon$  such that $$\overline{\la T,U\ra}=[\mathcal R_T].$$
\end{thm}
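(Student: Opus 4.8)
The plan is to take an odometer-type ergodic automorphism built as an inverse limit of cyclic permutations on a Cantor-like partition, and to produce the involution $U$ supported on a piece of small measure that, together with $T$, generates a dense subgroup of $[\mathcal R_T]$. Concretely, I would fix a sequence of integers $k_n\to\infty$ with $\prod 1/k_n$ controlling $\epsilon$, and realize $(X,\mu)$ as $\prod_n \{0,\dots,k_n-1\}$ with the product of uniform measures; let $T$ be the odometer (add $1$ with carry). This $T$ is ergodic, and for each $N$ the ``first $N$ coordinates'' give a $T$-tower decomposition of $X$ into $K_N=\prod_{n<N}k_n$ atoms cyclically permuted by $T^{K_N}$ acting trivially. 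I would then choose $U$ to be a carefully chosen involution supported on one (or a bounded number) of these atoms, so that $\mu(\supp U)<\epsilon$: the point is to pick $U$ whose conjugates by powers of $T$ let one swap arbitrary pairs of atoms at every finite level.

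The key steps, in order, are: (1) verify $T$ is ergodic and record the combinatorial structure of the tower decompositions; (2) observe that the subgroup generated by $T$ and the ``full group of the tower at level $N$'', call it $[\mathcal R_T]_N$ (finite permutations of the $K_N$ atoms, combined with the internal cyclic dynamics), is dense in $[\mathcal R_T]$ as $N\to\infty$ — this is the standard approximation of full-group elements by elements supported on finitely many tower levels, using that $d_u$-small perturbations suffice; (3) show that $T$ together with the single involution $U$ topologically generates each $[\mathcal R_T]_N$, by checking that conjugating $U$ by suitable words in $T$ yields all transpositions of atoms at level $N$ (transpositions generate the symmetric group, and the cyclic internal dynamics is already given by $T$); (4) conclude $\overline{\langle T,U\rangle}\supseteq \overline{\bigcup_N [\mathcal R_T]_N}=[\mathcal R_T]$, and the reverse inclusion is trivial since $U\in[\mathcal R_T]$.

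The main obstacle is step (3): arranging a \emph{single} involution $U$ of arbitrarily small support whose $T$-conjugates generate the full symmetric group on the atoms at every level simultaneously. A naive transposition of two adjacent atoms at level $N$ has support $\mu \approx 1/K_N$, which is fine for large $N$ but one needs the \emph{same} $U$ to work for all $N$; the trick is to let $U$ act non-trivially only inside one atom of some fixed low level, but in a way that, when viewed inside a finer level $N$, still produces a transposition of two sub-atoms that can be propagated by $T$. This forces a recursive/self-similar choice of $U$ compatible with all the tower refinements at once, and verifying that the group generated really exhausts each finite symmetric group (rather than a proper subgroup like an alternating or imprimitive group) is the delicate combinatorial heart of the argument. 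Once the finite-level generation is established, the passage to the closure and the ergodicity of $T$ are routine given Proposition~\ref{isopar} and the density of $\bigcup_N[\mathcal R_T]_N$.
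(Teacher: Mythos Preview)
The paper does not prove Theorem~\ref{EO}; it is quoted from Matui~\cite{matuifullgp2} and used as a black box in the proof of Theorem~\ref{thethm}. There is therefore no proof in this paper against which to compare your proposal.

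For what it is worth, your outline is in the spirit of Matui's original construction, which also builds an explicit ergodic transformation (of odometer/rank-one type) together with an explicit involution of small support, and shows that $T$-conjugates of that involution suffice to approximate every element of the full group. You have correctly flagged step~(3) --- arranging a \emph{single} involution whose $T$-conjugates generate the full symmetric group on the atoms at every level simultaneously --- as the non-routine combinatorial core, and that is indeed where the real work lies; your sketch does not yet specify such a $U$ or verify the generation claim, so as it stands it is a plausible plan rather than a proof.
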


We also borrow the following result from Kittrell and Tsankov.

\begin{thm}[\cite{MR2599891}, theorem 4.7]\label{ktdense}
Let $\mathcal R_1$, $\mathcal R_2$,... be measure-preserving equivalence relations on $(X,\mu)$, and let $\mathcal R$ be their join (i.e. the smallest equivalence relation containing all of them). Then $\la\bigcup_{n\in\N}[\mathcal R_n]\ra$ is dense in $[\mathcal R]$.
\end{thm}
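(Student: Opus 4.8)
The plan is to reduce the statement to the case of two equivalence relations and then, for two relations, to approximate a given element of $[\mathcal R]$ by composing partial isomorphisms read off from chains through the join.

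\emph{Reduction to two relations.} Set $\mathcal S_N=\mathcal R_1\vee\cdots\vee\mathcal R_N$, so the $\mathcal S_N$ increase with union $\mathcal R$. Given $\varphi\in[\mathcal R]$ and $\epsilon>0$, the set $A=\{x:(x,\varphi(x))\in\mathcal S_N\}$ has measure $>1-\epsilon$ once $N$ is large, and then $\varphi|_A\in[[\mathcal S_N]]$. A partial isomorphism in $[[\mathcal S]]$ always extends to an element of $[\mathcal S]$: it preserves the $\mathcal S$-classes and the measure, so the measures of its domain and range agree on each ergodic component of $\mathcal S$, and one fills in the complements using Proposition~\ref{isopar} on the non-atomic components and a counting argument on the atomic ones, gluing measurably over the ergodic decomposition. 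Hence $\varphi$ lies within $\epsilon$ of $[\mathcal S_N]$, and it suffices to prove $[\mathcal S_N]\subseteq\overline{\langle[\mathcal R_1],\dots,[\mathcal R_N]\rangle}$ for each $N$. Writing $\mathcal S_N=\mathcal S_{N-1}\vee\mathcal R_N$, and using that the closure of the group generated by a dense subgroup together with a subset equals the closure of the group generated by the whole (multiplication being continuous), an induction on $N$ reduces everything to the case $N=2$.

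\emph{The two-relation case.} Assume $\mathcal R=\mathcal R_1\vee\mathcal R_2$ and fix $\varphi\in[\mathcal R]$, $\epsilon>0$. By Feldman--Moore choose countable groups $\Gamma_i\leq[\mathcal R_i]$ generating $\mathcal R_i$, so $\mathcal R$ is the orbit relation of $\langle\Gamma_1,\Gamma_2\rangle$; writing $\mathcal R^{(k)}$ for the pairs joined by an alternating chain of length exactly $k$ (trivial steps allowed, fixed alternation pattern), the set $Y=\{x:(x,\varphi(x))\in\mathcal R^{(k)}\}$ has measure $>1-\epsilon$ for $k$ large, and by Lusin--Novikov uniformization one may measurably select chains $x=x_0^x,x_1^x,\dots,x_k^x=\varphi(x)$ for $x\in Y$, with $(x_{j-1}^x,x_j^x)\in\mathcal R_{i_j}$. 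Ideally one reads off the step maps $x_{j-1}^x\mapsto x_j^x$ as elements of $[\mathcal R_{i_j}]$ and peels them off inducting on $k$: if $x\mapsto x_{k-1}^x$ is a genuine measure-preserving partial isomorphism, it lies in $[[\mathcal R_{i_k}]]$, extends to some $\tilde\rho\in[\mathcal R_{i_k}]$ as above, and then $\tilde\rho^{-1}\varphi$ agrees on $Y$ with $x\mapsto x_{k-1}^x$, a map whose chains have length $k-1$; extending that map to a full-group element, applying the inductive hypothesis, and using the left-invariance of $d_u$ together with $\varphi=\tilde\rho\circ(\tilde\rho^{-1}\varphi)$ then finishes this case.

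\emph{The main obstacle.} The difficulty, which is the technical heart of the argument, is that the step maps need be neither injective (chains issued from distinct points may run through a common point and then diverge) nor measure preserving, so that already for $k=1$ the map $\varphi$ appears only as a \emph{patchwork} of an element of $[\mathcal R_1]$ with an element of $[\mathcal R_2]$, not as a product. What saves the day is that every fibre of a step map is contained in a single $\mathcal R_i$-class, hence is countable and null, so by Lusin--Novikov one can cut $Y$ into countably many pieces on each of which the step map is injective; combined with an exhaustion/back-and-forth inside $[[\mathcal R_i]]$ one arranges the step maps, piecewise, to be measure-preserving partial isomorphisms up to a small error, and the standard commutator and conjugation manoeuvres in full groups let one realize the resulting patchworks inside $\langle[\mathcal R_1],[\mathcal R_2]\rangle$. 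Only a small ``collision'' set, still of positive chain length, survives each round of peeling, so iterating the construction with a geometrically decreasing tolerance absorbs the accumulated error into $\epsilon$. Granting the two-relation case, the reductions above conclude the proof.
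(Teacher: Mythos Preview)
First, note that the paper does \emph{not} prove this theorem: it is quoted from Kittrell--Tsankov \cite{MR2599891} as an external input and used as a black box, so there is no ``paper's own proof'' to compare your attempt against.

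As for the attempt itself: the reductions to finitely many and then to two subrelations are correct and cleanly explained, and your identification of the ``main obstacle'' is on target --- the real issue is precisely that a \emph{patchwork}, an element of $[\mathcal R]$ agreeing with different words on different pieces of $X$, is not obviously close to a product of elements of the $[\mathcal R_i]$. But your resolution of this obstacle is not a proof. The phrases ``exhaustion/back-and-forth inside $[[\mathcal R_i]]$'', ``standard commutator and conjugation manoeuvres'', and ``iterating the construction with a geometrically decreasing tolerance'' gesture at a strategy without carrying it out, and it is exactly here that all the work lies. You never say which conjugation identity turns a patchwork into a product, nor why the ``collision'' set actually shrinks rather than merely being reshuffled at each stage; without this, the induction on chain length does not close. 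There is also a slip in your peeling step: the map $x\mapsto x_{k-1}^x$ lies in $[[\mathcal R]]$ (it traverses a chain of length $k-1$), not in $[[\mathcal R_{i_k}]]$ as written; it is the \emph{last-step} map $x_{k-1}^x\mapsto\varphi(x)$ that belongs to $[[\mathcal R_{i_k}]]$ when it happens to be a partial isomorphism, and it is this map that should be extended to $\tilde\rho\in[\mathcal R_{i_k}]$.

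One concrete route that does close the gap (and is close in spirit to the argument in \cite{MR2599891}): reduce first to approximating involutions $\tilde\psi$ associated to $\psi\in[[\mathcal R]]$ with $\dom\psi\cap\rng\psi=\emptyset$, since such involutions generate a dense subgroup of $[\mathcal R]$. For a fixed word $w=\gamma_k\cdots\gamma_1$ with $\gamma_j\in[\mathcal R_{i_j}]$ and $\psi=w$ on a set $A$ with disjoint intermediate images $B_0=A,\,B_1,\dots,B_k=\psi(A)$, the identity $\widetilde{\psi_2\psi_1}=\tilde\psi_2\,\tilde\psi_1\,\tilde\psi_2$ (valid when the three sets involved are disjoint) shows that $\tilde\psi$ is a word in the involutions $\tilde\gamma_j|_{B_{j-1}}\in[\mathcal R_{i_j}]$. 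Making the intermediate images disjoint is where the genuine measure-theoretic work happens, and it is this step, not the conjugation identity itself, that your sketch leaves unaddressed.
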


\section{Pre-$p$-cycles, $p$-cycles and associated full groups}\label{rphi}

Let us give a way to construct pmp equivalence relations whose nontrivial classes have cardinality $p$.

\begin{df} Let $p\in\N$. A \textbf{pre}-$p$-\textbf{cycle} is a graphing $\Phi=\{\varphi_1,...,\varphi_{p-1}\}$ such that  the following two conditions  are satisfied:
\begin{enumerate}[(i)]\item  $\forall i\in\{1,...,p-1\}, \rng\varphi_i=\dom\varphi_{i+1}$.
\item The following sets are all disjoint:
 $$\dom\varphi_1, \dom \varphi_2,...,\dom\varphi_{p-1},\rng\varphi_{p-1}.$$
\end{enumerate}
\end{df}

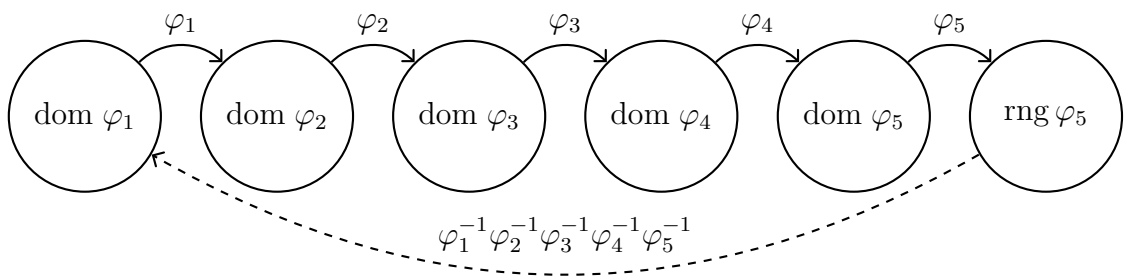
\begin{figure}[htbp]
\centering
\tikzstyle{bigtiparrow}=[->,thick, >=angle 90]
\tikzstyle{input}=[circle,
                                    thick,
                                    minimum size=2cm,
                                    inner sep=0pt,
                                    draw=black,
                                    fill=white]
\tikzstyle{points}=[circle,
                                    thick,
                                    minimum size=2cm,
                                    draw=white,
                                    fill=white]                                    
\tikzstyle{edgy}=[thick, bend left=45, >=angle 90]

\begin{tikzpicture}
  
  \matrix[row sep=1cm,column sep=0.5cm] {
        \node (A1) [input]{$\dom \varphi_1$}; 
        &
        \node (A2)   [input]{$\dom\varphi_2$};     
        &
        \node (A3)   [input]{$\dom\varphi_3$};     

        &        
    \node(A4)[input]{$\dom\varphi_4$};
    & \node (Ap1) [input]{$\dom\varphi_{5}$};
        &
        \node (Ap) [input]{$\rng\varphi_{5}$};

        \\
     };

\draw [bigtiparrow] (A1) to [bend  left=45] node[above] {$\varphi_1$} (A2);

\draw [bigtiparrow] (A2) to [bend  left=45] node[above] {$\varphi_2$} (A3);

\draw [bigtiparrow] (A3) to [bend  left=45] node[above] {$\varphi_3$} (A4);
\draw [bigtiparrow] (A4) to [bend  left=45] node[above] {$\varphi_{4}$} (Ap1);
\draw [bigtiparrow] (Ap1) to [bend  left=45] node[above] {$\varphi_{5}$} (Ap);
\draw [bigtiparrow, dashed] (Ap) to [bend  left=30] node[ above=0.15cm] {$\varphi_1\inv\varphi_2\inv\varphi_3\inv\varphi_4\inv\varphi_{5}\inv$} (A1);

\end{tikzpicture}

\caption{A pre-$6$-cycle $\Phi=\{\varphi_1,...,\varphi_{5}\}$. The dashed arrow shows how to get the corresponding $6$-cycle $C_\Phi$ (cf. the paragraph following definition \ref{cycle}).}
\end{figure}

\begin{rmq}
Given a graphing $\Phi$, the fact that it is a pre-$p$-cycle is witnessed by a unique enumeration of its elements. When we have  a pre-$p$-cycle $\Phi$ and write it as $\Phi=\{\varphi_1,...,\varphi_{p-1}\}$, we will always assume that conditions (i) and (ii) above are satisfied.
\end{rmq}

\begin{df}\label{cycle} A $p$\textbf{-cycle} is an element $C\in \Aut(X,\mu)$ whose orbits have cardinality $1$ or $p$.
\end{df}

Given a pre-$p$-cycle $\Phi=\{\varphi_1,...,\varphi_{p-1}\}$, we define a $p$-cycle $C_\Phi\in\Aut(X,\mu)$ as follows:
$$C_\Phi(x)=\left\{\begin{array}{ll}\varphi_i(x) & \text{if }x\in \dom\varphi_i\text{ for some }i<p, \\
\varphi_1\inv\varphi_2\inv\cdots\varphi_{p-1}\inv(x) &\text{if }x\in \rng\varphi_{p-1},\\
x & \text{otherwise.}\end{array}\right.$$

Note that $\{C_\Phi\}$ generates the pmp equivalence relation $\mathcal R_\Phi$, whose classes have cardinality either $1$ or $p$. We now give a useful generating set for the full group of $\mathcal R_\Phi$. 

\begin{prop}\label{isopgen}
If $\Phi=\{\varphi_1,...,\varphi_{p-1}\}$ is a pre-$p$-cycle, then for all $i\in\{1,...,p-1\}$, the full group of $\mathcal R_\Phi$ is topologically generated by $[\mathcal R_{\{\varphi_i\}}]\cup\{C_\Phi\}$.
\end{prop}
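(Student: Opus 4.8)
The plan is to exploit the cyclic symmetry carried by $C_\Phi$ in order to reduce the statement to the Kittrell--Tsankov density criterion (theorem \ref{ktdense}).

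First I would unravel the structure of $\mathcal R_\Phi$. Write $A_1=\dom\varphi_1,\dots,A_{p-1}=\dom\varphi_{p-1}$ and $A_p=\rng\varphi_{p-1}$; by condition (ii) these sets are pairwise disjoint, and since partial isomorphisms preserve measure they all have the same measure. The automorphism $C_\Phi$ is the identity off $Y:=A_1\cup\cdots\cup A_p$, and on $Y$ it cyclically permutes the pieces, sending $A_k$ onto $A_{k+1}$ via $\varphi_k$ for $k<p$ and $A_p$ onto $A_1$ via $\varphi_1\inv\cdots\varphi_{p-1}\inv$. Hence every nontrivial $\mathcal R_\Phi$-class has the form $\{a,C_\Phi a,\dots,C_\Phi^{p-1}a\}$ with $a\in A_1$ and exactly one point in each $A_k$, and $C_\Phi^p$ is the identity on $Y$.

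Now fix $i\in\{1,\dots,p-1\}$ and set $G=\la [\mathcal R_{\{\varphi_i\}}]\cup\{C_\Phi\}\ra$, so that $G\subseteq[\mathcal R_\Phi]$. Since $\varphi_i$ agrees with $C_\Phi$ on $A_i$, the relation $\mathcal R_{\{\varphi_i\}}$ is precisely the one whose nontrivial classes are the pairs $\{z,C_\Phi z\}$ with $z\in A_i$. For $k\in\Z$ put $\mathcal R^{(k)}:=C_\Phi^k\,\mathcal R_{\{\varphi_i\}}\,C_\Phi^{-k}$; its nontrivial classes are the pairs $\{C_\Phi^k z,C_\Phi^{k+1}z\}$ with $z\in A_i$, and from the standard identity $[T\mathcal R T\inv]=T[\mathcal R]T\inv$ we get $[\mathcal R^{(k)}]=C_\Phi^k[\mathcal R_{\{\varphi_i\}}]C_\Phi^{-k}\subseteq G$. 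Because $C_\Phi^p$ is the identity on $Y$, as $k$ ranges over $0,1,\dots,p-2$ the relations $\mathcal R^{(0)},\dots,\mathcal R^{(p-2)}$ pair up $p-1$ distinct consecutive pairs of points inside each nontrivial class; I would then check that the resulting graph on each $p$-element class is a path through all its vertices, so that the join of $\mathcal R^{(0)},\dots,\mathcal R^{(p-2)}$ equals $\mathcal R_\Phi$. Applying theorem \ref{ktdense} to these (finitely many) relations, $\la\bigcup_{k=0}^{p-2}[\mathcal R^{(k)}]\ra$ is dense in $[\mathcal R_\Phi]$; as this subgroup lies in $G\subseteq[\mathcal R_\Phi]$, we conclude $\overline G=[\mathcal R_\Phi]$.

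The proof is essentially bookkeeping, and I do not expect a genuinely hard step, since all the analytic content is absorbed into theorem \ref{ktdense}. The one point that needs care is the index arithmetic modulo $p$: one must use the wrap-around piece of $C_\Phi$ (from $A_p$ back to $A_1$) to see that the conjugates $\mathcal R^{(k)}$ are honest cyclic shifts of $\mathcal R_{\{\varphi_i\}}$, and check that $p-1$ of them already join up to $\mathcal R_\Phi$ regardless of the chosen $i$.
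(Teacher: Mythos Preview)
Your proof is correct and follows essentially the same approach as the paper: conjugate $[\mathcal R_{\{\varphi_i\}}]$ by powers of $C_\Phi$ to obtain enough sub-full-groups, then invoke theorem \ref{ktdense}. The only cosmetic difference is that the paper identifies the conjugates explicitly via the relation $C_\Phi\varphi_jC_\Phi^{-1}=\varphi_{j+1}$ (for $1\leq j\leq p-2$), so that they are precisely the $[\mathcal R_{\{\varphi_j\}}]$ and no wrap-around bookkeeping is needed, whereas you argue directly that the $p-1$ conjugates give a spanning path on each class.
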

\begin{proof} By definition, the equivalence relation $\mathcal R_\Phi$ is the join of the equivalence relations $\mathcal R_{\{\varphi_1\}}, ...,\mathcal R_{\{\varphi_{p-1}\}}$. Then by theorem \ref{ktdense}  we only need to show that for all $j\in\{1,..,p-1\}$, $[\mathcal R_{\{\varphi_j\}}]$ is contained in the group generated by $[\mathcal R_{\{\varphi_i\}}]\cup\{C_\Phi\}$. But for all $j\in\{1,...,p-2\}$, we have the conjugation relation
$$C_\Phi\varphi_jC_\Phi\inv= \varphi_{j+1},$$
which in turn yields $C_\Phi[\mathcal R_{\{\varphi_j\}}]C_{\Phi}\inv=[\mathcal R_{\{\varphi_{j+1}\}}]$. So the group generated by $[\mathcal R_{\{\varphi_i\}}]\cup\{C_\Phi\}$ contains all $[\mathcal R_{\{\varphi_j\}}]$ for $1\leq j\leq p-1$.
\end{proof}

\section{Proof of the theorem}\label{theproof}

We now get to the proof of our \hyperref[thethm]{main result}. Fix an ergodic measure-preserving equivalence relation $\mathcal R$ on $(X,\mu)$.\\

Let us first show that $t([\mathcal R])\geq \lfloor\Cost(\mathcal R)\rfloor+1$, following an argument of Miller \cite[cor. 4.12]{MR2599891}. This inequality is equivalent to $t([\mathcal R])>\Cost(\mathcal R)$.  Observe that topological generators of $[\mathcal R]$ must also generate the equivalence relation $\mathcal R$, so the definition of cost yields $t([\mathcal R])\geq\Cost(\mathcal R)$. Assume, towards a contradiction, that  $t([\mathcal R])=\Cost(\mathcal R)$ and fix $\Cost(\mathcal R)=n$ topological generators of $[\mathcal R]$. Then again, these elements also generate $\mathcal R$ and thus realize the cost of $\mathcal R$. By proposition I.11 in \cite{MR1728876}, they  induce a free action of $\mathbb F_n$ on $(X,\mu)$.
 In particular, the countable group they generate is discrete in $[\mathcal R]$, hence closed, which contradicts its density.\\

The other inequality requires more work. Suppose that for a fixed $n\in\N$, $\Cost \mathcal R<n+1$. We want to find $n+1$ topological generators of $[\mathcal R]$. It is a standard fact that we can find a pmp ergodic equivalence relation $\mathcal R_0\subseteq \mathcal R$  generated by a single automorphism (cf. for instance  \cite[prop. \!\!3.5]{MR2583950}).

Let $\Phi_0$ be a graphing of cost 1 which generates $\mathcal R_0$. Lemma III.5 in \cite{MR1728876} provides a graphing $\Phi$ such that $\Cost(\Phi)<n$ and $\Phi_0\cup\Phi$ generates $\mathcal R$. Let 
$$c=\frac{\Cost(\Phi)}{n}<1,$$
and fix some odd $p\in\N$ such that $(\frac {p+2} p)c<1$. Splitting the domains of the partial automorphisms in $\Phi$, we find $\Phi_1$,...,$\Phi_n$ of cost $c$ such that $\Phi=\Phi_1\cup\cdots\cup \Phi_n$.

By theorem \ref{EO} for $\epsilon=1-(\frac {p+2}p)c$, there is an ergodic automorphism $T_0$ of $(X,\mu)$ and an involution $U_0\in[\mathcal R_{T_0}]$, whose support has measure less than $\epsilon$, such that $T_0$ and $U_0$ topologically generate $[\mathcal R_{T_0}]$. Dye's theorem \cite[thm. \!\!5]{MR0131516} yields $\varphi\in\Aut(X,\mu)$ which conjugates $[\mathcal R_{T_0}]$ and $[\mathcal R_0]$, so up to conjugation by $\varphi$ we can assume that $\mathcal R_{T_0}=\mathcal R_0$.

Let $A_1,...,A_{p+2}$ be disjoint subsets of $X\setminus\supp U_0$, of measure $\frac c p$ each. Recall proposition \ref{isopar}, and note that up to pre- and post-composition of the partial isomorphisms of each $\Phi_i$ by elements in $[[\mathcal R_0]]$, we can assume that each $\Phi_i$ is a pre-$(p+1)$-cycle $\Phi_i=\{\varphi_1^i,\varphi_2^i,...,\varphi_p^i\}$ such that $\varphi^i_j: A_j\to A_{j+1}$.

Now choose $\psi\in[[\mathcal R_0]]$ with domain $A_{p+1}$ and range $A_{p+2}$, and add it to every $\Phi_i$. We get $n$ pre-$(p+2)$-cycles $\tilde\Phi_i=\Phi_i\cup\{\psi\}$, and $\Phi_0\cup\tilde\Phi_1\cup\cdots\cup \tilde\Phi_n$ still generates $\mathcal R$. Consider the associated $(p+2)$-cycles $C_{\tilde\Phi_i}$.

\begin{claim}
The $(n+2)$ elements $T_0,U_0,C_{\tilde\Phi_1},...,C_{\tilde\Phi_n}$ topologically generate the full group of $\mathcal R$.
\end{claim}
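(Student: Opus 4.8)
The strategy is to show that the closed subgroup $G=\overline{\langle T_0,U_0,C_{\tilde\Phi_1},\dots,C_{\tilde\Phi_n}\rangle}$ contains enough full groups to recover all of $[\mathcal R]$ via theorem~\ref{ktdense}. First, since $T_0$ and $U_0$ topologically generate $[\mathcal R_{T_0}]=[\mathcal R_0]$, we immediately have $[\mathcal R_0]\subseteq G$. The point of working with pre-$(p+2)$-cycles rather than the original graphings is exactly so that proposition~\ref{isopgen} applies: each $\tilde\Phi_i=\{\varphi_1^i,\dots,\varphi_p^i,\psi\}$ is a pre-$(p+2)$-cycle, so the full group of $\mathcal R_{\tilde\Phi_i}$ is topologically generated by $[\mathcal R_{\{\chi\}}]\cup\{C_{\tilde\Phi_i}\}$ for \emph{any one} partial isomorphism $\chi$ appearing in $\tilde\Phi_i$. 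The natural choice is $\chi=\psi$, which is the same for all $i$ and lies in $[[\mathcal R_0]]$.

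So the key step is to check that $[\mathcal R_{\{\psi\}}]\subseteq G$. Here $\psi\in[[\mathcal R_0]]$ has domain $A_{p+1}$ and range $A_{p+2}$, both disjoint subsets of $X\setminus\supp U_0$ of measure $c/p$. The equivalence relation $\mathcal R_{\{\psi\}}$ has all nontrivial classes of size $2$, and its full group $[\mathcal R_{\{\psi\}}]$ is generated (as a full group) by the involution swapping $A_{p+1}$ and $A_{p+2}$ via $\psi$, together with automorphisms supported inside $A_{p+1}\cup A_{p+2}$; more precisely $[\mathcal R_{\{\psi\}}]$ consists of maps built from $\psi$ and elements of $[\mathcal R_0]$ restricted suitably. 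Since $A_{p+1}\cup A_{p+2}\subseteq X\setminus\supp U_0$ and $\psi\in[[\mathcal R_0]]$, one argues that $[\mathcal R_{\{\psi\}}]$ is contained in $[\mathcal R_0]\subseteq G$: indeed any element of $[\mathcal R_{\{\psi\}}]$ moves points only along $\mathcal R_0$-classes (as $\psi$ does), so $\mathcal R_{\{\psi\}}\subseteq\mathcal R_0$ and hence $[\mathcal R_{\{\psi\}}]\subseteq[\mathcal R_0]$. Therefore, by proposition~\ref{isopgen} applied with $\chi=\psi$, we get $[\mathcal R_{\tilde\Phi_i}]\subseteq G$ for each $i=1,\dots,n$, since $G$ already contains both $[\mathcal R_{\{\psi\}}]$ and $C_{\tilde\Phi_i}$.

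Finally, $\mathcal R$ is the join of $\mathcal R_0,\mathcal R_{\tilde\Phi_1},\dots,\mathcal R_{\tilde\Phi_n}$, because $\Phi_0\cup\tilde\Phi_1\cup\cdots\cup\tilde\Phi_n$ generates $\mathcal R$ (where $\Phi_0$ generates $\mathcal R_0$). Theorem~\ref{ktdense} then tells us that $\langle[\mathcal R_0]\cup[\mathcal R_{\tilde\Phi_1}]\cup\cdots\cup[\mathcal R_{\tilde\Phi_n}]\rangle$ is dense in $[\mathcal R]$. Since all of these full groups lie in the closed group $G$, we conclude $G=[\mathcal R]$, which is the claim. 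The main obstacle I anticipate is the verification that $[\mathcal R_{\{\psi\}}]\subseteq G$ with full rigor: one must be careful that ``$\psi\in[[\mathcal R_0]]$ implies $\mathcal R_{\{\psi\}}\subseteq\mathcal R_0$ implies $[\mathcal R_{\{\psi\}}]\subseteq[\mathcal R_0]$'' is genuinely valid here --- it is, since the full group is monotone in the equivalence relation --- but this is where the careful earlier arrangement (choosing the $A_j$ disjoint from $\supp U_0$, and $\psi\in[[\mathcal R_0]]$) pays off, and one should make sure no measure-theoretic detail about the splitting into the $\tilde\Phi_i$ has been lost. The role of the numerical constraints ($p$ odd, $(\frac{p+2}{p})c<1$, $\epsilon=1-(1+\frac p2)c$) is only to guarantee that such disjoint sets $A_1,\dots,A_{p+2}$ of measure $c/p$ exist inside $X\setminus\supp U_0$ and that the $\Phi_i$ can be realized as pre-$(p+1)$-cycles on them; this has already been arranged before the claim, so it does not re-enter the proof of the claim itself.
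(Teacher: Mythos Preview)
Your proposal is correct and follows essentially the same approach as the paper: obtain $[\mathcal R_0]\subseteq G$ from $T_0,U_0$, deduce $[\mathcal R_{\{\psi\}}]\subseteq[\mathcal R_0]\subseteq G$ from $\psi\in[[\mathcal R_0]]$ and monotonicity of full groups, apply proposition~\ref{isopgen} with the distinguished element $\psi$ to get each $[\mathcal R_{\tilde\Phi_i}]\subseteq G$, and conclude via theorem~\ref{ktdense}. Your extended discussion of the structure of $[\mathcal R_{\{\psi\}}]$ and of the numerical constraints is unnecessary for the argument (as you yourself note), but does no harm.
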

\begin{proof}
Let $G$ be the closed group generated by $T_0,U_0,C_{\tilde\Phi_1},...,C_{\tilde\Phi_n}$. Recall that $T_0$ and $U_0$ have been chosen so that they topologically generate the full group of $\mathcal R_0$, so $G$ contains $[\mathcal R_0]$. 
Because $\psi$ is a partial isomorphism of $\mathcal R_0$, we get $[\mathcal R_{\{\psi\}}]\subseteq[\mathcal R_0]\subseteq G$. Then, since for all $i\in\{1,...,n\}$ we have $\psi\in\tilde\Phi_i$ and $C_{\tilde\Phi_i}\in G$, proposition \ref{isopgen} implies that $G$ contains $[\mathcal R_{\tilde\Phi_i}]$.
But $\mathcal R$ is the join of $\mathcal R_0$, $\mathcal R_{\tilde\Phi_1},...,\mathcal R_{\tilde\Phi_n}$, so by theorem \ref{ktdense} we are done.
\end{proof}

Then observe that the involution $U_0$ and the $(p+2)$-cycle $C_{\tilde \Phi_1}$ have disjoint support, so they commute and the product $U_1=U_0C_{\tilde\Phi_1}$ satisfies
$$(U_0C_{\tilde\Phi_1})^n=U_0^nC_{\tilde\Phi_1}^n\;\;\;\;\;\forall n\in\N.$$
In particular,  
\begin{align*}
U_1^{p+2}&=(U_0C_{\tilde\Phi_1})^{p+2}=U_0^{p+2}=U_0\;\;\;\;\text{ since }p\text{ is odd and}\\
U_1^{p+3}&=(U_0C_{\tilde\Phi_1})^{p+3}=U_0^{p+3}C_{\tilde\Phi_1}^{p+3}=C_{\tilde\Phi_1},
\end{align*}
so that in fact $T_0,U_1,C_{\tilde\Phi_2},...,C_{\tilde\Phi_n}$ topologically generate $[\mathcal R]$.\\

Note that the construction yields the following statement, which says that the cost is an invariant of the metric group $([\mathcal R],d_u)$.

\begin{thm}\label{coutgentop}Let $\mathcal R$ be a pmp ergodic equivalence relation. Then we have the formula
$$\Cost (\mathcal R)= \inf\left\{\sum_{i=1}^{t([\mathcal R])}d_u(T_i,\mathrm{id}): \overline{\la T_1,...,T_{t([\mathcal R])}\ra}=[\mathcal R]\right\}.$$
\end{thm}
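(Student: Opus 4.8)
The plan is to establish the two inequalities separately. The inequality $\Cost(\mathcal R)\leq\inf\{\cdots\}$ is immediate: given topological generators $T_1,\dots,T_{t([\mathcal R])}$ of $[\mathcal R]$, they also generate $\mathcal R$ as an equivalence relation (as in the proof of theorem \ref{thethm}: otherwise their closure would lie in the full group of the proper subrelation they generate, which is a proper closed subgroup of $[\mathcal R]$). Since the support of each $T_i$ is $T_i$-invariant, restricting $T_i$ to $\supp T_i$ yields a partial isomorphism, and these restrictions form a graphing of $\mathcal R$ of cost $\sum_i\mu(\supp T_i)=\sum_i d_u(T_i,\mathrm{id})$; hence $\Cost(\mathcal R)\leq\sum_i d_u(T_i,\mathrm{id})$, and we pass to the infimum. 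This already settles the case $\Cost(\mathcal R)=\infty$ (both sides being then infinite), so the rest of the argument assumes $\Cost(\mathcal R)<\infty$, and we write $n=\lfloor\Cost(\mathcal R)\rfloor\geq 1$, so that $t([\mathcal R])=n+1$.

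For the reverse inequality, fix $\delta>0$; I would rerun the construction of section \ref{theproof} with this specific value of $n$ (admissible because $\Cost(\mathcal R)<n+1$ always holds) while keeping track of the measures of supports. First, by Lemma III.5 of \cite{MR1728876}, applied to the cost-$1$ graphing $\Phi_0$ of the ergodic, singly generated subrelation $\mathcal R_0\subseteq\mathcal R$ of section \ref{theproof} (which realizes $\Cost(\mathcal R_0)=1$), one can choose $\Phi$ with $\Phi_0\cup\Phi$ generating $\mathcal R$ and $\Cost(\Phi)$ as close to $\Cost(\mathcal R)-1$ as desired; fix such a $\Phi$ with $\Cost(\Phi)<\Cost(\mathcal R)-1+\eta$ for a small $\eta>0$, and put $c=\Cost(\Phi)/n$, which is strictly less than $1$ by a margin depending on $\Cost(\mathcal R)$ and $\eta$ but not on $p$. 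Then pick an odd integer $p$ large enough that $\tfrac{p+2}{p}c<1$ and $\tfrac 2p\Cost(\mathcal R)<\delta/3$, and invoke theorem \ref{EO} with $\epsilon>0$ chosen both less than $\delta/3$ and at most $1-\tfrac{p+2}{p}c$; this last choice guarantees that the $p+2$ pieces $A_1,\dots,A_{p+2}$ of measure $c/p$ still fit disjointly inside $X\setminus\supp U_0$, so the construction goes through verbatim and yields $T_0,U_0,C_{\tilde\Phi_1},\dots,C_{\tilde\Phi_n}$ generating $[\mathcal R]$. Replacing the commuting pair $U_0,C_{\tilde\Phi_1}$ by $U_1=U_0C_{\tilde\Phi_1}$ leaves exactly $n+1=t([\mathcal R])$ generators, and since $\supp U_0$ and $\supp C_{\tilde\Phi_1}$ are disjoint this does not change the total support.

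It then remains to add up the supports: each $C_{\tilde\Phi_i}$ is supported on $A_1\cup\dots\cup A_{p+2}$, of measure $\tfrac{p+2}{p}c$, while $d_u(T_0,\mathrm{id})=1$ and $\mu(\supp U_0)<\epsilon$, so
\[
d_u(T_0,\mathrm{id})+d_u(U_1,\mathrm{id})+\sum_{i=2}^n d_u(C_{\tilde\Phi_i},\mathrm{id})=1+\mu(\supp U_0)+\sum_{i=1}^n\mu(\supp C_{\tilde\Phi_i})<1+\epsilon+\tfrac{p+2}{p}\Cost(\Phi),
\]
and the right-hand side is less than $1+\epsilon+(1+\tfrac 2p)(\Cost(\mathcal R)-1+\eta)$, which the choices of $\epsilon$, $p$ and $\eta$ make smaller than $\Cost(\mathcal R)+\delta$. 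Letting $\delta\to 0$ finishes the proof.

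The step I expect to be the real obstacle is reconciling the two competing demands on $p$: one wants $p$ large so that the blow-up factor $\tfrac{p+2}{p}$ multiplying $\Cost(\Phi)$ is negligible, yet the sets $A_j$ have total measure $\tfrac{p+2}{p}c$ and must leave room for $\supp U_0$. The resolution — the only genuinely new point beyond section \ref{theproof} — is to run the construction with exactly $n=\lfloor\Cost(\mathcal R)\rfloor$: this is forced to be the correct number of generators by the first inequality together with theorem \ref{thethm}, and it keeps $c=\Cost(\Phi)/n$ bounded away from $1$ uniformly in $p$, so the inequality $\tfrac{p+2}{p}c<1$ survives the limit $p\to\infty$ (enlarging $n$ would shrink $c$ but produce too many generators). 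The other point to be checked with care is that Lemma III.5 genuinely lets $\Cost(\Phi)$ approach $\Cost(\mathcal R)-1$, so that all the remaining slack is absorbed into the single contribution $d_u(T_0,\mathrm{id})=1$.
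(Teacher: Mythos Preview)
Your proposal is correct and follows exactly the approach the paper intends: the paper itself offers no separate proof of this theorem, merely noting that ``the construction yields'' it, and you have filled in precisely the bookkeeping required --- taking $\Cost(\Phi)$ close to $\Cost(\mathcal R)-1$ via Lemma~III.5, choosing $\epsilon$ small, and letting $p$ be large --- together with the easy lower bound. One small remark: the ``competing demands on $p$'' you worry about in your final paragraph do not actually compete, since both the desire to make $\tfrac{p+2}{p}$ close to $1$ and the need for $\tfrac{p+2}{p}c<1$ (given $c<1$ fixed independently of $p$) push $p$ in the same direction; your resolution is correct but the tension is illusory.
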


\noindent\textbf{Acknowledgments.} 
I am very grateful to Damien Gaboriau and Julien Melleray for their constant help and support. Theorem \ref{thethm} would certainly have read $$t([\mathcal R])\leq \lfloor \Cost (\mathcal R)\rfloor+2$$ without the enlightening conversations I have had with Damien, and the proof would have been quite different. Finally, I thank Damien, Julien and Adriane Kaïchouh for proofreading  an undisclosed amount of preliminary versions of this article. 

\bibliographystyle{alpha}
\bibliography{/Users/francoislemaitre/ownCloud/Maths/biblio}

\end{document}